\documentclass[11pt]{amsart}

\usepackage[T1]{fontenc}
\usepackage[british]{babel}
\usepackage{lmodern}
\usepackage{amsmath,amssymb,amsthm,mathtools}
\usepackage{enumitem}
\usepackage{microtype}
\usepackage[hidelinks]{hyperref}

\makeatletter
\@namedef{subjclassname@2020}{\textup{2020} Mathematics Subject Classification}
\makeatother

\hypersetup{
  pdftitle={Kirszbraun extensions preserving uniform distance in Hilbert spaces},
  pdfauthor={Krzysztof J. Ciosmak},
  pdfkeywords={Lipschitz extension, Kirszbraun theorem, Hilbert space,
    barycentric inequality, minimal support, weak transport,
    convex Poincare inequality}
}

\newtheorem{theorem}{Theorem}[section]
\newtheorem{proposition}[theorem]{Proposition}
\newtheorem{corollary}[theorem]{Corollary}
\newtheorem{lemma}[theorem]{Lemma}
\newtheorem{example}[theorem]{Example}
\theoremstyle{remark}
\newtheorem{remark}[theorem]{Remark}

\newcommand{\R}{\mathbb{R}}
\newcommand{\conv}{\operatorname{conv}}
\newcommand{\dist}{\operatorname{dist}}
\newcommand{\ip}[2]{\left\langle #1,#2\right\rangle}
\newcommand{\norm}[1]{\left\lVert #1\right\rVert}

\title[Kirszbraun extensions preserving uniform distance]
{Kirszbraun extensions preserving uniform distance in Hilbert spaces}

\author{Krzysztof J. Ciosmak \href{https://orcid.org/0000-0001-9571-1160}{\includegraphics{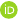}}}
\address{Beijing Institute of Mathematical Sciences and Applications,
No.~544, Hefangkou Village, Huaibei Town, Huairou District,
Beijing 101408, China}
\email{kciosmak@bimsa.cn}
\subjclass[2020]{Primary 47H09, 54C20; Secondary 46C05, 49Q22, 60E15}
\keywords{Lipschitz extension, Kirszbraun theorem, Hilbert space,
barycentric inequality, weak transport,
convex Poincar\'e inequality}

\begin{document}

\begin{abstract}
Let $X$ be a subset of a real Hilbert space and let
$v\colon X\to Y$, where $Y$ is a real Hilbert space.  We prove that the following
conditions are equivalent: whenever $A\subset X$, $\rho\geq0$, and
$u\colon A\to Y$ is $1$-Lipschitz with
$\norm{u(x)-v(x)}\leq\rho$ for $x\in A$, there is a $1$-Lipschitz extension
$\widetilde u\colon X\to Y$ with
$\norm{\widetilde u(x)-v(x)}\leq\rho$ for $x\in X$; and for every
$1\leq k\leq\dim Y$,
\[
 \norm{v(x_0)-\sum_{i=1}^k t_i v(x_i)}
 \leq
 \norm{x_0-\sum_{i=1}^k t_i x_i}
\]
whenever $x_0,\ldots,x_k\in X$, $t_1,\ldots,t_k\geq0$, and
$\sum_{i=1}^k t_i=1$.  Previous necessity results required
$\dim Y\leq3$ or convexity of $X$.  For finite-dimensional targets, an
application gives an exact data processing characterisation for a
finite branching hierarchy connecting Wasserstein and barycentric weak
transport.  If $Y$ is infinite-dimensional or
$\dim\operatorname{Aff}X+1\leq\dim Y$, we also obtain a lifting theorem for
convex Lipschitz functions and transfer convex Poincar\'e inequalities without
increasing the constant.
\end{abstract}

\maketitle

\section{Introduction and main result}

The Kirszbraun--Valentine theorem states that a $1$-Lipschitz map between
subsets of real Hilbert spaces admits a $1$-Lipschitz extension to the whole
domain space \cite{Kirszbraun1934,Valentine1945}.  Kirszbraun extension
theorems for metric spaces with compatible upper and lower Alexandrov
curvature bounds were proved by Lang and Schroeder
\cite{LangSchroeder1997}.  A proof based on convex analysis and an explicit
extension formula appear, respectively, in \cite{ReichSimons2005} and
\cite{AzagraLeGruyerMudarra2021}.
Beyond existence, continuous selection in the uniform norm has been studied for
extensions that preserve the Lipschitz constant of bounded maps, in
finite-dimensional and Hilbert space settings
\cite{Kopecka2012Continuous,Kopecka2012Bootstrapping}.  For Hilbert spaces,
the extension may moreover be selected with its image contained in the closed
convex hull of the original image \cite{KopeckaReich2011}.  The continuous
selection theorem was extended to bounded nonexpansive maps from subsets of
spaces with Alexandrov curvature bounded below by $\kappa$ into complete
spaces with curvature bounded above by the same $\kappa$, for $\kappa\leq0$.
When $\kappa=0$, the result also covers bounded Lipschitz maps, preserving both
the Lipschitz constant and the requirement that the image lie in the closed
convex hull of the original image
\cite{EspinolaNicolae2015}.  Separately,
Kirszbraun--Valentine extensions preserving invariance under groups of
isometries of Hilbert spaces were obtained in
\cite{CavagnariSavareSodini2025}.

Here the extension is required to satisfy a second simultaneous constraint.
Given a reference map
$v\colon X\to Y$ and a $1$-Lipschitz map $u\colon A\to Y$, $A\subset X$, one asks for a
$1$-Lipschitz extension that does not increase the uniform distance from $v$.
This question has been already addressed in the study of continuity properties of Lipschitz
extensions \cite{Ciosmak2021,Ciosmak2024}. In \cite[Theorem~1.2]{Ciosmak2024} the sufficiency of the barycentric condition $(\mathrm{C})$ below has been established for $(\mathrm{E})$ to hold true.
For finite-dimensional targets, the necessity of the barycentric condition
below was conjectured in \cite[\S4.3.5, p.~88]{CiosmakThesis}.  The conjectured
implication was subsequently proved when $\dim Y\leq3$ or when $X$ is convex
\cite[Theorem~1.2]{Ciosmak2024}.  Our main theorem proves it without either
restriction and extends the resulting characterisation to arbitrary real
Hilbert targets.

The strategy of the proof is the same as in \cite{Ciosmak2024} for $\mathrm{dim}Y\in\{1,2,3\}$. It relies on constructing an isometric embedding $\{u(x_1),\dotsc,u(x_k)\}$ of $\{x_1,\dotsc,x_k\}$ in $Y$ in such a way that $\norm{v(x_i)-u(x_i)}$ for $i=1,2\dotsc,k$ are suitably controlled and $v(x_1)-u(x_i)$ is moved along the direction orthogonal to the simplex $\{v(x_1),\dotsc,v(x_k)\}$. 
The obstruction to the earlier proof was an isometric embedding step.
Example~4.10 of \cite{Ciosmak2024} shows that the required isometric embedding need not
exist for an arbitrary four-point configuration.  The point of the present
argument is that arbitrary configurations are unnecessary. Proposition~\ref{prop:minimal-simplex},
shows that every strict violation of $(\mathrm{C})$ admits an affinely independent simplex
witness and that a suitable scalar component of $v$ has a $1$-Lipschitz affine
interpolant on that support.  Those two facts give exactly the embedding needed
in the limiting argument.  Thus the four-point example obstructs the former proof
method, not the conjectured characterisation.

\begin{theorem}\label{thm:main}
Let $Z$ and $Y$ be real Hilbert spaces, let $X\subset Z$, and let $v\colon X\to Y$.
The following are equivalent:
\begin{enumerate}[label={},leftmargin=2.6em,labelsep=0.5em,itemsep=0.8em]
\item[\textup{$(\mathrm E)$}] for every $A\subset X$, every $\rho\geq0$, and
every $1$-Lipschitz map $u\colon A\to Y$ satisfying
\[
  \norm{u(x)-v(x)}\leq\rho \qquad\text{for }x\in A,
\]
there is a $1$-Lipschitz extension $\widetilde u\colon X\to Y$ satisfying
\[
  \norm{\widetilde u(x)-v(x)}\leq\rho \qquad\text{for }x\in X;
\]
\item[\textup{$(\mathrm C)$}] for every positive integer $k\leq\dim Y$,
one has
\[
  \norm{v(x_0)-\sum_{i=1}^{k}t_i v(x_i)}
  \leq
  \norm{x_0-\sum_{i=1}^{k}t_i x_i}
\]
for all $x_0,x_1,\ldots,x_k\in X$ and all $t_1,\ldots,t_k\geq0$ with
$\sum_{i=1}^k t_i=1$.
\end{enumerate}
\end{theorem}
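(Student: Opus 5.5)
The proof has two directions; the sufficiency $(\mathrm C)\Rightarrow(\mathrm E)$ is the main content of \cite[Theorem~1.2]{Ciosmak2024} for finite-dimensional $Y$. I would first recall that argument and check that it extends to arbitrary Hilbert $Y$. The structure of that argument is standard: extend one point at a time by Zorn's lemma (or by transfinite induction and a compactness argument in the weak topology). For one new point $x_0$, a suitable value $y$ must lie in
\[
\bigcap_{x\in A} \overline{B}\bigl(u(x),\norm{x-x_0}\bigr)\cap \overline{B}\bigl(v(x_0),\rho\bigr).
\]
Weak compactness of closed balls reduces nonemptiness to finite subfamilies. For finitely many balls in a Hilbert space, a minimax/Kirszbraun-type argument reduces nonemptiness to an inequality of the form $\norm{\sum_i t_i u(x_i)-v(x_0)}\le \rho+\norm{x_0-\sum_i t_i x_i}$, in the appropriate weighted sense. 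This is where $(\mathrm C)$ enters, combined with $\norm{u(x_i)-v(x_i)}\le\rho$ and the triangle inequality. By Carath\'eodory's theorem, the convex combinations involved need at most $\dim Y$ points, since they live in the span of the $u(x_i)-v(x_0)$ differences. For infinite-dimensional $Y$ the condition is imposed for all $k$, so no reduction is needed. I would present this direction by citing \cite{Ciosmak2024} and indicating the modifications.

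The necessity $(\mathrm E)\Rightarrow(\mathrm C)$ is the new part, and I would argue by contradiction. Suppose $(\mathrm C)$ fails for some $k\le\dim Y$. By Proposition~\ref{prop:minimal-simplex} we may pass to a strict violation with $x_1,\dots,x_k$ affinely independent and minimal. The proposition also gives a unit vector $e\in Y$ and a $1$-Lipschitz affine interpolant $a$ on $\conv\{x_1,\dots,x_k\}$ of the scalar component $\ip{v}{e}$. I would take $A=\{x_1,\dots,x_k\}$ and build an isometric embedding $u$ of $A$ into $Y$ that differs from $v$ by a large translation in a direction orthogonal to the affine hull of $v(x_1),\dots,v(x_k)$. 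The affine interpolant supplies the component along $e$, and the remaining Pythagorean defect is filled by a large orthogonal coordinate $R$, using one extra dimension beyond the span. The minimal support and $k\le\dim Y$ are exactly what should guarantee that this room exists.

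One then checks that $\norm{u(x_i)-v(x_i)}\le\rho_R$ with $\rho_R\approx R+O(1/R)$ uniformly in $i$. Applying $(\mathrm E)$ at $x_0$ produces a point within $\norm{x_0-x_i}$ of each $u(x_i)$ and within $\rho_R$ of $v(x_0)$. Expanding squares and letting $R\to\infty$, the leading terms cancel. The $O(1)$ terms then yield $\norm{v(x_0)-\sum t_i v(x_i)}\le\norm{x_0-\sum t_ix_i}$, after averaging the distance inequalities with the weights $t_i$, which contradicts the strict violation.

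The main obstacle is constructing the embedding so that all the distances $\norm{u(x_i)-v(x_i)}$ agree up to $o(1/R)$ corrections. This is precisely where \cite[Example~4.10]{Ciosmak2024} fails for general configurations. I would rely on the affine independence and the $1$-Lipschitz affine interpolant from Proposition~\ref{prop:minimal-simplex} to write $u(x_i)=v(x_i)+R\,n+w_i$, with $w_i$ solving a small linear system, and then verify the limit computation carefully.
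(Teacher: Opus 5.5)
You are right to cite \cite{Ciosmak2024} for $(\mathrm C)\Rightarrow(\mathrm E)$; the paper does the same. A small correction there: the bound $k\leq\dim Y$ comes from Helly's theorem applied to the balls together with $\overline B(v(x_0),\rho)$, not from Carath\'eodory, which gives one point too many.

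The necessity argument has a genuine gap. The translation direction is wrong, and the limit does not give the norm inequality you claim. Write $\bar v=\sum_i t_iv(x_i)$ and $d=\norm{x_0-\bar x}$. Suppose $u(x_i)=v(x_i)+Rn+w_i$ with $n$ a unit vector, the $w_i$ bounded, and $\ip{n}{w_i}=\kappa$ for all $i$. Since $\norm{Rn+a}=R+\ip{n}{a}+O(1/R)$, you get $\rho_R=R+\kappa+O(1/R)$ and $\norm{\bar u-v(x_0)}=R+\kappa+\ip{n}{\bar v-v(x_0)}+O(1/R)$. Letting $R\to\infty$ in the test inequality therefore yields exactly $\ip{-n}{v(x_0)-\bar v}\leq d$. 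Components orthogonal to $n$ vanish in the limit, so $\norm{v(x_0)-\bar v}\leq d$ is never recovered. You reach a contradiction only if the violation is strict in the direction $-n$.

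A direction orthogonal to $\operatorname{Aff}\{v(x_1),\ldots,v(x_k)\}$ need not have this property, even for a minimal simplex witness. Take $Z=Y=\R^2$, $x_1=-e_1$, $x_2=e_1$, $x_0=\tfrac12e_2$, $t_1=t_2=\tfrac12$, $v(x_1)=0$, $v(x_2)=\tfrac1{10}e_1$, $v(x_0)=e_1$. Then $v$ is $1$-Lipschitz on these points, so $\{x_1,x_2\}$ is a minimal affinely independent support. Moreover $\norm{v(x_0)-\bar v}=\tfrac{19}{20}>\tfrac12=d$, yet $v(x_0)-\bar v$ is parallel to $v(x_2)-v(x_1)$. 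Translating along $\pm e_2$ gives at best $0\leq d$. Tellingly, for such $n$ the function $\ip{n}{v}$ is constant on $x_1,\ldots,x_k$, so the interpolant from Proposition~\ref{prop:minimal-simplex} plays no role at all.

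The missing idea is to translate along $-e$, where $e$ is the calibrating direction of Corollary~\ref{cor:calibrated-simplex}. You then require the bounded parts $b_i=u(x_i)-v(x_i)+Re$ to be orthogonal to $e$; the orthogonality is to these corrections, not to the $v$-simplex. By affine independence, $u(x_i)=u(x_1)+Q(x_i-x_1)$ for a linear isometry $Q\colon L\to Y$. The requirement then reads $\ip{e}{Q(x_i-x_1)}=\ip{e}{v(x_i)-v(x_1)}=\ip{g}{x_i-x_1}$, that is, $Q^*e=g$.

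This is where $\norm{g}\leq1$ and $k\leq\dim Y$ enter: take $Q=eg^*+U(\mathrm{Id}-gg^*)^{1/2}$ with $U\colon L\to e^\perp$ an isometric embedding. It is a square-root construction, not a small linear system for the $w_i$, and it needs no extra dimension. With $u(x_i)=v(x_1)-Re+Q(x_i-x_1)$ one gets $\rho_R=\sqrt{R^2+B^2}$ and $\norm{\bar u-v(x_0)}\geq R+q$, where $q=\ip{e}{v(x_0)-\bar v}>d$. Letting $R\to\infty$ gives $q\leq d$, a contradiction.

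In the example above, $e=e_1$ and $g=\tfrac1{20}e_1$. Translating along $-e_1$, which is parallel to the $v$-simplex, does produce the contradiction. Accordingly, the obstacle you name is misplaced. The norms $\norm{u(x_i)-v(x_i)}$ need not agree up to $o(1/R)$; in the paper they differ by $O(1/R)$. What matters is exact equality of their $e$-components.
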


When $A$ is nonempty and the deviation is bounded, taking
\[
 \rho=\sup\{\norm{u(x)-v(x)}\mid x\in A\}
\]
in $(\mathrm E)$ shows that the uniform distance is preserved exactly,
since the extension agrees with $u$ on $A$.

Consequently, conditions \emph{(i)}--\emph{(iii)} of
Theorem~1.2 in \cite{Ciosmak2024} are equivalent for arbitrary real Hilbert
targets and arbitrary subsets $X$.  For finite-dimensional targets, this
resolves the conjecture stated in \cite[\S4.3.5, p.~88]{CiosmakThesis}.

In addition, we introduce a finite branching weak transport cost and show that, for finite-dimensional targets, property
$(\mathrm E)$ is exactly its data processing condition, that is, contraction of the finite-branching cost under pushforward by \(v\), at branching level
$\dim Y$. Under a dimension assumption that yields the unrestricted finite
barycentric condition, we also obtain convex Lipschitz lifting and transfer of
convex Poincar\'e inequalities without deterioration of the constant.

\section{A minimal support simplex principle}

The following proposition is independent of the extension problem and is valid
in an arbitrary real normed space.  It shows that every strict scalar violation of the
barycentric inequality has a simplex witness with a $1$-Lipschitz affine
interpolant.

\begin{proposition}[Minimal support simplex principle]
\label{prop:minimal-simplex}
Let $Z$ be a real normed space, let $X\subset Z$, and let $f\colon X\to\R$.
Suppose that, for some $x_0,x_1,\ldots,x_n\in X$ and
$t_1,\ldots,t_n\geq0$ with $\sum_{i=1}^n t_i=1$,
\begin{equation}\label{eq:scalar-violation}
 f(x_0)-\sum_{i=1}^n t_i f(x_i)
 >
 \norm{x_0-\sum_{i=1}^n t_i x_i}.
\end{equation}
Then there exist distinct points
$y_1,\ldots,y_m\in\{x_1,\ldots,x_n\}\setminus\{x_0\}$ and numbers
$\lambda_1,\ldots,\lambda_m>0$ with $\sum_{i=1}^m\lambda_i=1$, such that
\begin{enumerate}[label=\textup{(\roman*)}]
\item $m\leq n$ and $y_1,\ldots,y_m$ are affinely independent;
\item
\[
 f(x_0)-\sum_{i=1}^m\lambda_i f(y_i)
 >
 \norm{x_0-\sum_{i=1}^m\lambda_i y_i};
\]
\item if
\[
 L=\operatorname{span}\{y_i-y_1\mid 1\leq i\leq m\},
\]
then the unique affine function $a$ on $y_1+L$ satisfying
$a(y_i)=f(y_i)$ for $1\leq i\leq m$ has the form
\[
 a(y_1+h)=f(y_1)+\ell(h)\qquad\text{for }h\in L,
\]
where $\ell\in L^*$ and $\norm{\ell}_{L^*}\leq1$.
\end{enumerate}
In particular,
\[
 m\leq\dim\operatorname{Aff}\{x_1,\ldots,x_n\}+1.
\]
\end{proposition}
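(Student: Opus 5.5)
Among all representations of a strict violation I will choose one that is minimal, and then read off (i)--(iii) from minimality.

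\emph{Reduction.} First merge repeated points among $x_1,\ldots,x_n$ by adding their weights, and discard indices with $t_i=0$. Neither step changes either side of \eqref{eq:scalar-violation}. If some $x_i$ equals $x_0$ with weight $t_i$, the case $t_i=1$ is impossible, since the left side would then be $0$ and the right side is also $0$. Otherwise write $p=\sum_{j\neq i}s_j x_j$ with $s_j=t_j/(1-t_i)$. Then the left side equals $(1-t_i)\bigl(f(x_0)-\sum s_j f(x_j)\bigr)$ and the right side equals $(1-t_i)\norm{x_0-p}$, so the violation persists with $x_0$ removed from the support.

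\emph{Choice of a minimal support.} Among all finite sets $S\subset\{x_1,\ldots,x_n\}\setminus\{x_0\}$ and all strictly positive weights on $S$ summing to $1$ that give a strict violation, pick one with $|S|=m$ minimal. Clearly $m\leq n$.

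\emph{Affine independence.} Suppose $y_1,\ldots,y_m$ were affinely dependent. Then there is $\mu\neq0$ with $\sum\mu_i=0$ and $\sum\mu_i y_i=0$. Along $\lambda+s\mu$ the point $\sum(\lambda_i+s\mu_i)y_i$ is constant, so the right side of the inequality does not change. The left side is affine in $s$, namely $F-sc$ with $c=\sum\mu_i f(y_i)$. Choose the sign of $s$ so that $-sc\geq0$, and move $s$ until the first weight vanishes; this happens because $\mu$ has entries of both signs. The violation persists on a smaller support, contradicting minimality. So the points are affinely independent, and hence $m\leq\dim\operatorname{Aff}\{x_1,\ldots,x_n\}+1$.

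\emph{The Lipschitz affine interpolant, (iii).} This is the main obstacle. Put $\Delta=\conv\{y_1,\ldots,y_m\}$ and let $a$ be the affine interpolant of $f$ on $y_1+L$. For any $q\in\Delta$, a minimal-support argument applied to sub-simplices shows that the affine quantity
\[
 g(q)=f(x_0)-a(q)-\norm{x_0-q}
\]
is positive at $q^*=\sum\lambda_i y_i$. Moreover $g(q)\leq0$ on every proper face of $\Delta$, since a positive value there would be a violation with smaller support; here I use $a(q)=\sum\mu_i f(y_i)$ for $q=\sum\mu_i y_i$. Now suppose $\norm{\ell}_{L^*}>1$. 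Take $h\in L$ with $\norm{h}=1$ and $\ell(h)>1$, and move from $q^*$ in the direction $h$ to the boundary point $q^*+sh\in\partial\Delta$, with $s>0$. Then
\[
 g(q^*+sh)\geq g(q^*)+s\ell(h)\cdot(-1)\cdot(-1)\ \text{?}
\]
More carefully: $a(q^*+sh)=a(q^*)+s\ell(h)$ and $\norm{x_0-q^*-sh}\leq\norm{x_0-q^*}+s$. This gives $g(q^*+sh)\geq g(q^*)-s\ell(h)-s$, which is the wrong direction, so I move in the direction $-h$ instead. Then
\[
 g(q^*-sh)\geq g(q^*)+s\ell(h)-s>g(q^*)>0.
\]
Since $q^*$ lies in the relative interior of $\Delta$ (all $\lambda_i>0$), the ray $q^*-sh$ does reach the relative boundary of $\Delta$ at some $s>0$. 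At that point the violation holds on a proper face, contradicting minimality. Therefore $\norm{\ell}\leq1$.

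I expect the only delicate points to be making sure that boundary points correspond to supports of strictly smaller cardinality, which is clear from affine independence, and handling the degenerate case $m=1$, where $L=\{0\}$ and (iii) is trivial.
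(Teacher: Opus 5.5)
Your proof is correct. Its overall structure matches the paper's: remove $x_0$ from the support, take a strict violation of minimal support, and use minimality to exclude violations on proper faces of the simplex. The two substantive steps, however, are carried out differently.

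For affine independence, the paper minimises $s\mapsto\sum_i s_i f(y_i)$ over the polytope of all representations of $p$ and takes an extreme point of the minimising face, whose support is automatically affinely independent. You instead slide along a single null direction $\mu$, in the sign that does not decrease the left-hand side, until a weight vanishes. This is the elementary Carath\'eodory reduction and needs no polytope language. The paper's extreme-point formulation has one advantage: it is reused for the fixed barycentre polytope $P_b$ when deriving $(\mathrm B)$ from $(\mathrm C)$.

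For (iii), the paper minimises $G=a+\norm{x_0-\cdot}$ over $K$. Minimality of the support puts the minimiser in the relative interior, and $|\ell(h)|\leq\norm{h}$ is then read off as a first-order condition there. You argue directly from $q^*=\sum_i\lambda_i y_i$: if $\ell(h)>1=\norm{h}$, the triangle inequality gives $g(q^*-sh)\geq g(q^*)+s(\ell(h)-1)$. So the violation persists up to the point where the ray leaves $\Delta$, and that point lies on a proper face.

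Both routes rest on the same triangle-inequality estimate. Yours avoids needing a minimiser (compactness of $K$, continuity of $G$) and uses only that the ray exits the bounded simplex; the paper's is a local optimality argument.

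When writing it up:
\begin{itemize}
\item delete the exploratory line ending in ``?'';
\item do not call $g$ affine, since it contains the norm term;
\item note that $g(q^*)>0$ is simply the chosen violation, not the outcome of a minimal-support argument.
\end{itemize}
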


Here and below, the support of a convex combination $\sum_{i=1}^mc_iz_i$ is the support of the measure $\sum_{i=1}^mc_i\delta_{z_i}$.  

\begin{proof}
First combine coefficients belonging to repeated points.  If $x_0$ occurs in
the resulting support with total weight $\tau$, then $\tau<1$, since otherwise
both sides of \eqref{eq:scalar-violation} vanish.  After deleting $x_0$ and
renormalising the remaining coefficients by $1-\tau$, both sides of the
strict inequality are divided by $1-\tau$.  Thus there remains a strict
violation whose support does not contain $x_0$.

Among all probability measures on the resulting finite set that give a strict
violation with the fixed point $x_0$, choose one whose support has
minimal cardinality $m$.  Write the corresponding violation as
\[
 f(x_0)-\sum_{i=1}^m\lambda_i f(y_i)>\norm{x_0-p},
 \qquad p=\sum_{i=1}^m\lambda_i y_i,
\]
where $y_1,\ldots,y_m$ are distinct and every $\lambda_i$ is positive.

Suppose first that $y_1,\ldots,y_m$ are affinely dependent.  Consider the
compact polytope
\[
 P=\left\{s\in\R^m\mid s_i\geq0\text{ for }1\leq i\leq m,\quad
 \sum_{i=1}^m s_i=1,\quad
 \sum_{i=1}^m s_i y_i=p\right\}.
\]
Minimise $s\mapsto\sum_{i=1}^m s_i f(y_i)$ over $P$, and choose an extreme point
$s$ of the minimising face.  Then $s$ is an extreme point of $P$.  The support of its convex representation is affinely independent.  Indeed, otherwise there would be a
nonzero vector $\alpha\in\R^m$, supported on the positive coordinates of $s$,
such that
\[
 \sum_{i=1}^m\alpha_i=0,
 \qquad \sum_{i=1}^m\alpha_i y_i=0.
\]
Thus, for all sufficiently small $r>0$, the distinct points $s+r\alpha$ and
$s-r\alpha$ would then belong to $P$ and have midpoint $s$. That would contradict the  fact that $s$ is an extreme point of $P$. Since
$y_1,\ldots,y_m$ are affinely dependent, the support of the convex representation of $s$
has fewer than $m$ elements.  Moreover,
\[
 f(x_0)-\sum_{i=1}^m s_i f(y_i)
 \geq f(x_0)-\sum_{i=1}^m\lambda_i f(y_i)
 >\norm{x_0-p}
 =\norm{x_0-\sum_{i=1}^m s_i y_i},
\]
contradicting the minimality of $m$.  Thus the support is affinely
independent.

Set $K=\conv\{y_1,\ldots,y_m\}$ and let $a$ be the affine interpolant of the
values $f(y_i)$.  For $z\in K$ define
\[
 G(z)=a(z)+\norm{x_0-z}.
\]
Since all $\lambda_i$ are positive, $p$ lies in the interior of $K$ relative
to its affine hull, and the chosen violation says $G(p)<f(x_0)$.  Since $K$
is compact and $G$ is continuous, we can choose a
minimiser $p_0\in K$.  If $p_0$ lies on the boundary of $K$ relative to its
affine hull, its unique barycentric representation uses fewer than $m$
vertices of $K$.  Since $G(p_0)\leq G(p)<f(x_0)$, this representation would again
give a strict violation with smaller support.  Consequently, $p_0$ lies in
the interior of $K$ relative to its affine hull.

Write
\[
 a(y_1+h)=f(y_1)+\ell(h)\qquad\text{for }h\in L.
\]
For any $h\in L$ and all sufficiently small $t>0$, both $p_0+th$ and
$p_0-th$ belong to $K$.  Minimality of $G(p_0)$ and the triangle inequality give
\[
 0\leq t\ell(h)+\norm{x_0-p_*-th}-\norm{x_0-p_*}
 \leq t\bigl(\ell(h)+\norm{h}\bigr).
\]
Hence $\ell(h)\geq-\norm{h}$.  Applying the same argument to $-h$ gives
$\ell(h)\leq\norm{h}$.  Therefore
$|\ell(h)|\leq\norm{h}$ for every $h\in L$, as required.
\end{proof}

For the vector-valued problem, the norming direction of an initial violation
calibrates the scalar reduction.

\begin{corollary}[Calibrated simplex witness]
\label{cor:calibrated-simplex}
Let $Z$ and $Y$ be real Hilbert spaces, let $X\subset Z$, and let $v\colon X\to Y$.
Suppose that $x_0,x_1,\ldots,x_n\in X$ and that
$t_1,\ldots,t_n\geq0$,
$\sum_{i=1}^n t_i=1$, satisfy
\[
 \norm{v(x_0)-\sum_{i=1}^n t_i v(x_i)}
 >\norm{x_0-\sum_{i=1}^n t_i x_i}.
\]
Then there exist $m\leq n$, affinely independent points
$y_1,\ldots,y_m$ in $\{x_1,\ldots,x_n\}$, none equal to $x_0$, positive
coefficients $\lambda_1,\ldots,\lambda_m$ with
$\sum_{i=1}^m\lambda_i=1$, and a unit vector $w\in Y$ such that
\begin{equation}\label{eq:calibrated-violation}
 \ip{w}{v(x_0)-\sum_{i=1}^m\lambda_i v(y_i)}
 >\norm{x_0-\sum_{i=1}^m\lambda_i y_i}.
\end{equation}
Moreover, if
\[
 L=\operatorname{span}\{y_i-y_1\mid 1\leq i\leq m\},
\]
then there is $g\in L$, $\norm{g}\leq1$, such that
\begin{equation}\label{eq:calibrated-gradient}
 \ip{w}{v(y_i)}
 =\ip{w}{v(y_1)}+\ip{g}{y_i-y_1}
 \qquad\text{for }1\leq i\leq m.
\end{equation}
 In particular, the same convex combination gives a strict violation of the barycentric inequality.
\end{corollary}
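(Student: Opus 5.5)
The plan is to reduce the vector-valued violation to a scalar one by a norming functional, and then apply Proposition~\ref{prop:minimal-simplex}. Set $q=v(x_0)-\sum_{i=1}^n t_i v(x_i)$. The hypothesis gives $\norm{q}>\norm{x_0-\sum t_ix_i}\geq0$, so $q\neq0$ and we may put $w=q/\norm{q}$, a unit vector in $Y$. Define the scalar function $f\colon X\to\R$ by $f(x)=\ip{w}{v(x)}$. Then
\[
 f(x_0)-\sum_{i=1}^n t_i f(x_i)=\ip{w}{q}=\norm{q}>\norm{x_0-\sum_{i=1}^n t_i x_i},
\]
which is exactly the scalar violation \eqref{eq:scalar-violation}.

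Applying Proposition~\ref{prop:minimal-simplex} to $f$, with $Z$ regarded as a normed space, produces the data required by the corollary. We obtain $m\leq n$ and distinct, affinely independent points $y_1,\ldots,y_m\in\{x_1,\ldots,x_n\}\setminus\{x_0\}$. We also obtain positive weights $\lambda_i$ summing to one, for which (ii) reads
\[
 \ip{w}{v(x_0)-\sum\lambda_i v(y_i)}>\norm{x_0-\sum\lambda_i y_i}.
\]
This is \eqref{eq:calibrated-violation}. Item (iii) gives $\ell\in L^*$ with $\norm{\ell}_{L^*}\leq1$ and $f(y_i)=f(y_1)+\ell(y_i-y_1)$ for every $i$.

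To get \eqref{eq:calibrated-gradient} we need the Riesz representation on $L$. Since $L$ is spanned by the $m-1$ vectors $y_i-y_1$, it is finite-dimensional, hence a closed subspace of the Hilbert space $Z$ and itself a Hilbert space. So there is $g\in L$ with $\ell(h)=\ip{g}{h}$ for all $h\in L$ and $\norm{g}=\norm{\ell}_{L^*}\leq1$. Substituting gives $\ip{w}{v(y_i)}=\ip{w}{v(y_1)}+\ip{g}{y_i-y_1}$.

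For the final assertion, note that $\norm{w}=1$, so Cauchy--Schwarz gives
\[
 \norm{v(x_0)-\sum\lambda_iv(y_i)}\geq\ip{w}{v(x_0)-\sum\lambda_iv(y_i)}.
\]
Combined with \eqref{eq:calibrated-violation}, this yields a strict violation of the vector barycentric inequality for the same convex combination.

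No step is a real obstacle; the only points needing care are these:
\begin{itemize}
\item checking that $q\neq0$, so that $w$ is defined;
\item noting that $L$ is finite-dimensional, so that Riesz applies with $g\in L$ itself rather than in some ambient closure.
\end{itemize}
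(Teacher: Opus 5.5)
Your proposal is correct and follows the paper's own proof: normalise the violation vector to get $w$, apply Proposition~\ref{prop:minimal-simplex} to $f=\ip{w}{v(\cdot)}$, use the Riesz representation on the finite-dimensional space $L$ to obtain $g$, and finish with Cauchy--Schwarz. Your remarks that the violation vector is nonzero and that $L$ is finite-dimensional, so that $g$ lies in $L$, simply make explicit what the paper leaves implicit.
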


\begin{proof}
Put
\[
 \Delta=v(x_0)-\sum_{i=1}^n t_i v(x_i).
\]
The assumed strict inequality gives $\norm{\Delta}>0$.  Set
$w=\Delta/\norm{\Delta}$ and apply Proposition~\ref{prop:minimal-simplex} to
$f(\cdot)=\ip{w}{v(\cdot)}$.  The Riesz representation theorem identifies the
linear part of the resulting affine interpolant with a vector $g\in L$ with
$\norm{g}\leq1$.  Finally,
\[
 \norm{v(x_0)-\sum_{i=1}^m\lambda_i v(y_i)}
 \geq\ip{w}{v(x_0)-\sum_{i=1}^m\lambda_i v(y_i)},
\]
so \eqref{eq:calibrated-violation} also gives a violation of the barycentric inequality.
\end{proof}

\section{The isometric test inequality}

We next record the consequence of $(\mathrm E)$ that converts an isometric
embedding into a barycentric estimate.

\begin{lemma}\label{lem:isometric-test}
Let $Z$ and $Y$ be real Hilbert spaces, let $X\subset Z$, and let $v\colon X\to Y$. Assume that $(\mathrm{E})$ is satisfied for $v$.  Let $x_0,x_1,\ldots,x_k\in X$, let
$t_1,\ldots,t_k\geq0$ with $\sum_{i=1}^k t_i=1$, and suppose that
$y_1,\ldots,y_k\in Y$ form an isometric copy
of $x_1,\ldots,x_k$:
\[
  \norm{y_i-y_j}=\norm{x_i-x_j}
  \qquad\text{for }1\leq i,j\leq k.
\]
Set
\[
  \bar x=\sum_{i=1}^k t_i x_i,\qquad
  \bar y=\sum_{i=1}^k t_i y_i,\qquad
  \rho=\max\{\norm{y_i-v(x_i)}\colon 1\leq i\leq k\}.
\]
Then
\begin{equation}\label{eq:isometric-test}
  \norm{\bar y-v(x_0)}
  \leq \rho+\norm{x_0-\bar x}.
\end{equation}
\end{lemma}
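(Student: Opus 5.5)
Apply $(\mathrm E)$ with the set $A=\{x_1,\ldots,x_k\}$ and the map $u(x_i)=y_i$. First check that $u$ is well defined and $1$-Lipschitz. If $x_i=x_j$, then $\norm{y_i-y_j}=\norm{x_i-x_j}=0$, so $y_i=y_j$ and $u$ is well defined. The isometry hypothesis then makes $u$ an isometry on $A$, hence $1$-Lipschitz. By the definition of $\rho$, $\norm{u(x)-v(x)}\leq\rho$ for every $x\in A$. Condition $(\mathrm E)$ therefore gives a $1$-Lipschitz extension $\widetilde u\colon X\to Y$ with $\norm{\widetilde u(x)-v(x)}\leq\rho$ for all $x\in X$, and in particular at $x_0$:
\[
  \norm{\widetilde u(x_0)-v(x_0)}\leq\rho .
\]

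Next, compare $\widetilde u(x_0)$ with $\bar y$. The key point is that a $1$-Lipschitz map from a subset of a Hilbert space into a Hilbert space that is an isometry on $\{x_1,\ldots,x_k\}$ controls distances to barycentres. Put $q=\widetilde u(x_0)$. Then $\norm{q-y_i}\leq\norm{x_0-x_i}$ for each $i$, while $\norm{y_i-y_j}=\norm{x_i-x_j}$. For any point $z$ and weights $t_i$ in a Hilbert space one has the variance identity
\[
  \sum_i t_i\norm{z-z_i}^2=\norm{z-\bar z}^2+\tfrac12\sum_{i,j}t_it_j\norm{z_i-z_j}^2 .
\]
Apply it in $Y$ with $z=q$, $z_i=y_i$, and in $Z$ with $z=x_0$, $z_i=x_i$. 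The pairwise terms coincide by the isometry, and the left-hand sides satisfy the inequality termwise. Hence
\[
  \norm{q-\bar y}^2\leq\norm{x_0-\bar x}^2, \qquad\text{that is,}\qquad \norm{\widetilde u(x_0)-\bar y}\leq\norm{x_0-\bar x}.
\]

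Finally, the triangle inequality gives
\[
  \norm{\bar y-v(x_0)}\leq\norm{\bar y-\widetilde u(x_0)}+\norm{\widetilde u(x_0)-v(x_0)}\leq\norm{x_0-\bar x}+\rho,
\]
which is \eqref{eq:isometric-test}.

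The only step beyond bookkeeping is the barycentre comparison, and it reduces to the variance identity above. That identity is a routine expansion of squared norms around $\bar z$, using $\sum_i t_i=1$.
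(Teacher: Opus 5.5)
Your proposal is correct and follows essentially the same route as the paper: you apply $(\mathrm E)$ to the map $x_i\mapsto y_i$ on $A=\{x_1,\ldots,x_k\}$, compare $\widetilde u(x_0)$ with $\bar y$ via the two variance identities, and finish with the triangle inequality. Your extra check that $u$ is well defined when some $x_i$ coincide is a detail the paper leaves implicit.
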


\begin{proof}
The distance identities ensure that the assignment $u(x_i)=y_i$ defines an
isometric embedding of $A=\{x_1,\ldots,x_k\}$ into $Y$.  Apply property $(\mathrm E)$ of $v$ with the subset $A$, the map $u$, and the bound $\rho$. Write $z=\widetilde u(x_0)$.  Thus
$\norm{z-v(x_0)}\leq\rho$.  Summing
$\norm{z-y_i}^2\leq\norm{x_0-x_i}^2$ for $1\leq i\leq k$, with weights $t_i$,
and using the two
variance identities
\[
 \sum_{i=1}^k t_i\norm{z-y_i}^2
 =\norm{z-\bar y}^2+
   \frac12\sum_{i=1}^k\sum_{j=1}^k t_i t_j\norm{y_i-y_j}^2
\]
and
\[
 \sum_{i=1}^k t_i\norm{x_0-x_i}^2
 =\norm{x_0-\bar x}^2+
   \frac12\sum_{i=1}^k\sum_{j=1}^k t_i t_j\norm{x_i-x_j}^2
\]
gives $\norm{z-\bar y}\leq\norm{x_0-\bar x}$, since the double sums agree.
The triangle inequality now yields \eqref{eq:isometric-test}.
\end{proof}

\section{Proof of the characterisation}

\begin{proof}[Proof of Theorem~\ref{thm:main}]
Only that $(\mathrm E)$ implies $(\mathrm C)$ remains to be proved.  Suppose that
$(\mathrm E)$ holds but that $(\mathrm C)$ fails.  The inequality in
$(\mathrm C)$ fails for some finite $n\leq\dim Y$.  By
Corollary~\ref{cor:calibrated-simplex}, after relabeling, there are affinely
independent $x_1,\ldots,x_k$, positive weights $t_1,\ldots,t_k$ with
$\sum_{i=1}^k t_i=1$, a
unit vector $w\in Y$, and a vector
\[
 g\in L:=\operatorname{span}\{x_i-x_1\mid 1\leq i\leq k\},
 \qquad \norm{g}\leq1,
\]
such that
\begin{equation}\label{eq:interpolation}
 \ip{w}{v(x_i)}=\ip{w}{v(x_1)}+\ip{g}{x_i-x_1}
 \qquad\text{for }1\leq i\leq k,
\end{equation}
and
\[
 q:=\ip{w}{v(x_0)-\bar v}>d:=\norm{x_0-\bar x},
 \qquad
 \bar x=\sum_{i=1}^k t_i x_i,\qquad
 \bar v=\sum_{i=1}^k t_i v(x_i).
\]
Here $k\leq n\leq\dim Y$ and $\dim L=k-1$.

Note that $\dim w^\perp=\dim Y-1\geq k-1=\dim L$. Hence $w^\perp$ contains a linearly isometric copy of $L$.  We
claim that there is a linear isometric embedding $Q\colon L\to Y$ satisfying
\begin{equation}\label{eq:Q}
 Q^*w=g.
\end{equation}
Here $Q^*\colon Y^*\to L^*$ denotes the adjoint of $Q$.
Indeed, choose an isometric embedding $U\colon L\to w^\perp$, and define $A\colon L\to L$ by
\[
 A=(\mathrm{Id}-gg^*)^{1/2}.
\]
The bound $\norm{g}\leq1$ makes $\mathrm{Id}-gg^*$ positive semidefinite, since for any $z\in L$
\[
 \ip{(\mathrm{Id}-gg^*)z}{z}
 =\norm{z}^2-\ip{g}{z}^2\geq0.
\]
Define 
\[
 Q=wg^*+UA.
\]
Since the range of $U$ is orthogonal to $w$,
\[
 Q^*Q=gg^*+A^2=\mathrm{Id},
 \qquad Q^*w=g,
\]
which proves the claim.

For $R>0$, define
\[
 y_i^{(R)}=v(x_1)-Rw+Q(x_i-x_1)
 \qquad\text{for }1\leq i\leq k.
\]
These points form an isometric copy of $\{x_1,\ldots,x_k\}$.  Let
\[
 b_i=v(x_1)-v(x_i)+Q(x_i-x_1)
 \qquad\text{for }1\leq i\leq k.
\]
Equations \eqref{eq:interpolation} and \eqref{eq:Q} give
\[
 \ip{w}{b_i}
 =\ip{w}{v(x_1)-v(x_i)}+\ip{g}{x_i-x_1}=0
 \qquad\text{for }1\leq i\leq k.
\]
Consequently, if $B=\max\{\norm{b_i}\mid 1\leq i\leq k\}$, then
\begin{equation}\label{eq:rhoR}
 \rho_R:=\max\{\norm{y_i^{(R)}-v(x_i)}\mid 1\leq i\leq k\}
 =\sqrt{R^2+B^2}.
\end{equation}
Writing $\bar y_R=\sum_{i=1}^k t_i y_i^{(R)}$, we see 
\[
 \bar y_R-v(x_0)=-Rw+c,
 \qquad c=v(x_1)-v(x_0)+Q(\bar x-x_1).
\]
Using \eqref{eq:interpolation} and \eqref{eq:Q} once more,
\begin{equation}\label{eq:cprojection}
 \ip{w}{c}
 =\ip{w}{v(x_1)-v(x_0)}+\ip{g}{\bar x-x_1}
 =\ip{w}{\bar v-v(x_0)}=-q.
\end{equation}
Thus $c+qw\perp w$.  Writing $C=\norm{c+qw}$,
Lemma~\ref{lem:isometric-test}, \eqref{eq:rhoR}, and
\eqref{eq:cprojection} yield
\[
 \sqrt{(R+q)^2+C^2}\leq d+\sqrt{R^2+B^2}.
\]
Subtracting the right-hand side from the left-hand side and rationalising
gives
\[
 \begin{aligned}
 0&\geq \sqrt{(R+q)^2+C^2}-d-\sqrt{R^2+B^2}\\
  &=\frac{2Rq+q^2+C^2-B^2}
  {\sqrt{(R+q)^2+C^2}+\sqrt{R^2+B^2}}-d.
 \end{aligned}
\]
The fraction tends to $q$ as $R\to\infty$, so $q-d\leq0$.  This contradicts
$q>d$.  Therefore we have proven that
$(\mathrm E)$ implies$(\mathrm C)$.  The reverse
implication is proven in \cite[Theorem~1.2]{Ciosmak2024}.
\end{proof}

\section{The four-point obstruction}

We next record why 
\cite[Example~4.10]{Ciosmak2024} obstructs the earlier placement argument but not the
characterisation.

\begin{example}[The four-point obstruction]\label{ex:square}
Let
\[
 a_1=(0,0),\quad a_2=(0,1),\quad a_3=(1,0),\quad a_4=(1,1),
\]
let $X=\{a_1,a_2,a_3,a_4\}\subset\R^2$, let $Y=\R^4$, and fix a unit vector
$w\in Y$.  Define
\begin{equation}\label{eq:example-v}
 v(a_1)=v(a_2)=v(a_3)=0,
 \qquad v(a_4)=\frac1{\sqrt2}w.
\end{equation}
This map satisfies $(\mathrm C)$.  To verify this, let $\bar a$ be a convex
combination of the four vertices and let $p$ be the total coefficient of
$a_4$.  The corresponding convex combination of the values of $v$ is
$pw/\sqrt2$.  If $p<1$, write
$\bar a=pa_4+(1-p)r$, where $r\in\conv\{a_1,a_2,a_3\}$.  For $x_0=a_4$,
\[
 \left\|v(a_4)-\frac p{\sqrt2}w\right\|
 =\frac{1-p}{\sqrt2}
 \leq(1-p)\norm{a_4-r}=\norm{a_4-\bar a},
\]
because
$\dist(a_4,\conv\{a_1,a_2,a_3\})=1/\sqrt2$.  For
$x_0=a_1,a_2,a_3$, respectively, taking inner products of $\bar a-a_j$ with
the unit vectors $(1,1)/\sqrt2$, $(1,0)$, and $(0,1)$ gives
\[
 \norm{\bar a-a_j}\geq\frac p{\sqrt2}=
 \norm{v(a_j)-\frac p{\sqrt2}w}.
\]
The case $p=1$ follows directly.

Nevertheless, fix $\delta>0$.  There is no isometric embedding $u\colon X\to Y$ satisfying
\begin{equation}\label{eq:projection-control}
 \left|\ip{u(a)-v(a)}{w}-\delta\right|\leq\varepsilon
 \quad\text{for }a\in X,
 \qquad u(a_1)-v(a_1)=\delta w,
\end{equation}
whenever $0<\varepsilon<1/(5\sqrt2)$.  Indeed, put
$e=u(a_2)-u(a_1)$, $f=u(a_3)-u(a_1)$, and $g=u(a_4)-u(a_1)$.
The six preserved distances give
$\norm{e}=\norm{f}=1$, $\ip{e}{f}=0$, $\norm{g}^2=2$, and
$\ip{g}{e}=\ip{g}{f}=1$; hence $\norm{g-e-f}=0$.  Thus
\[
 u(a_4)=u(a_2)+u(a_3)-u(a_1).
\]
The three constraints at $a_1,a_2,a_3$ imply
$|\ip{u(a_4)-u(a_1)}{w}|\leq4\varepsilon$, whereas the constraint at $a_4$
and the anchor at $a_1$ give
\[
 \left|\ip{u(a_4)-v(a_4)}{w}-\delta\right|
 \geq\frac1{\sqrt2}-4\varepsilon>\varepsilon,
\]
a contradiction.
\end{example}

The four square vertices are affinely dependent.  By contrast,
Proposition~\ref{prop:minimal-simplex} forces the support used in the proof of
Theorem~\ref{thm:main} to be the vertex set of a simplex and supplies an
interpolating vector $g$ with $\norm{g}\leq1$, needed to construct $Q$.
The obstruction therefore cannot occur on the configuration to which the isometric embedding argument is applied.

\section{Applications}

We record two consequences of the new implication: an exact finite branching data processing
characterisation and, under a dimension hypothesis, a lifting theorem for
convex Lipschitz functions that transfers convex Poincar\'e inequalities with
the same constant.

\subsection{Finite branching weak transport}

Weak transport has its origins in entropy--coupling methods for concentration
\cite{Marton1996}.  The framework in which the cost depends nonlinearly on a
conditional law was developed in \cite{GozlanRobertoSamsonTetali2017};
existence, duality, and optimality principles were subsequently established in
\cite{BackhoffBeiglbockPammer2019}.  A recent fundamental theorem establishes
primal attainment and strong duality, characterises optimisers, and, under additional hypotheses, gives dual attainment
\cite{BeiglbockPammerRiessSchrott2025}.  Connections of weak transport with
the Schr\"odinger problem, the Brenier--Strassen theorem, mechanism design,
and semimartingale transport are developed in
\cite{BackhoffVeraguasPammer2022}.

Assume in this subsection that the Hilbert spaces are separable and that
$X\subset Z$ is Borel.  Under $(\mathrm E)$, if $Y\ne\{0\}$,
Theorem~\ref{thm:main} and the case $k=1$ of $(\mathrm C)$ imply that $v$ is
$1$-Lipschitz; when $Y=\{0\}$ this is immediate.  Hence $v$ is Borel
measurable and the pushforwards below are well defined for general probability
measures.  For a real Hilbert space $H$
and $p\geq1$, let $\mathcal P_p(H)$ denote the set of Borel probability measures on
$H$ with finite $p$th moment, and let $W_{p,H}$ denote the $p$-Wasserstein
distance induced by the norm of $H$.  If $E\subset H$ is Borel, we write
$\mathcal P_p(E)$ for the members of $\mathcal P_p(H)$ concentrated on $E$.
The weak transport definitions below are used only when $H$ is separable.

Fix $q\in\{1,2,\ldots\}$.  For $\mu,\nu\in\mathcal P_p(H)$, let
$\Gamma_q(\mu,\nu)$ be the class of all joint triples $(S,T,U)$, defined on a
common probability space, such that $S\sim\mu$, $T\sim\nu$, the auxiliary
random variable $U$ takes values in a standard Borel space, and a regular
conditional kernel $\kappa(s,u;\mathord\cdot)$ for $T$ given $(S,U)$ is
concentrated on at most $q$ points for $\mathcal L(S,U)$-almost every $(s,u)$.
Put
\begin{equation}\label{eq:branching-cost}
 \mathsf B^{[q]}_{p,H}(\nu\mid\mu)
 =\inf\left\{\mathbb E\norm{S-\mathbb E[T\mid S,U]}^p\mid 
              (S,T,U)\in\Gamma_q(\mu,\nu)\right\}.
\end{equation}
Such a kernel exists under the standard Borel assumptions
\cite[Theorem~8.5]{Kallenberg2021}.  We use $\mathcal L(T\mid S,U)$ for the
random probability measure $\kappa(S,U;\mathord\cdot)$; thus, for every Borel
set $B\subset H$, $\mathcal L(T\mid S,U)(B)$ is a version of
$\mathbb P(T\in B\mid S,U)$.  The support condition is independent of the
chosen version up to an $\mathcal L(S,U)$-null set.  Conditional expectations
are understood in the Bochner sense.  For a triple in $\Gamma_q(\mu,\nu)$,
$U$ acts as a branch label: conditionally on $S$, the kernel
$\mathcal L(T\mid S,U)$ gives a mixture decomposition of
$\mathcal L(T\mid S)$ into laws concentrated on at most $q$ points.  No
independence of $U$ from $(S,T)$ is required.  We write
$\Gamma_\infty(\mu,\nu)$ and $\mathsf B^{[\infty]}_{p,H}$ when this restriction on the support
is removed.

For $q=1$, every admissible triple satisfies
$T=\mathbb E[T\mid S,U]$ almost surely, while taking $U=T$ makes every
coupling of $\mu$ and $\nu$ admissible.  Thus
$\mathsf B^{[1]}_{p,H}(\nu\mid\mu)=W_{p,H}^p(\mu,\nu)$.  At the other endpoint,
conditional Jensen's inequality gives
\[
 \mathbb E\norm{S-\mathbb E[T\mid S,U]}^p
 \geq \mathbb E\norm{S-\mathbb E[T\mid S]}^p.
\]
Conversely, restricting to constant $U$ yields the usual barycentric weak
transport cost after taking the infimum over couplings.  Consequently,
\begin{equation}\label{eq:branching-hierarchy}
 \begin{aligned}
 \overline{\mathcal T}_{p,H}(\nu\mid\mu)
 &=\mathsf B^{[\infty]}_{p,H}(\nu\mid\mu)
 \leq\mathsf B^{[q+1]}_{p,H}(\nu\mid\mu)
 \leq\mathsf B^{[q]}_{p,H}(\nu\mid\mu)\\
 &\leq\mathsf B^{[1]}_{p,H}(\nu\mid\mu)=W_{p,H}^p(\mu,\nu),
 \end{aligned}
\end{equation}
where
\[
 \overline{\mathcal T}_{p,H}(\nu\mid\mu)
 =\inf\left\{\mathbb E\norm{S-\mathbb E[T\mid S]}^p\mid 
              S\sim\mu,\ T\sim\nu\right\}
\]
is the usual barycentric weak transport cost
\cite{GozlanRobertoSamsonTetali2017,GozlanJuillet2020}.  In the transformed
coupling used below, the auxiliary variable is $U'=(S,U)$; retaining $S$
prevents atoms from distinct fibres of a noninjective $v$ from being merged
after applying $v$.

Barycentric weak transport is closely connected with convex order.  On the
real line, this connection underlies weak transport--entropy forms of the
convex Poincar\'e inequality
\cite{GozlanRobertoSamsonShuTetali2018}; in finite-dimensional Euclidean space,
for the quadratic cost, it leads to a Brenier--Strassen factorisation of an
optimiser into a deterministic transport followed by a martingale coupling
\cite{GozlanJuillet2020}.  The present
finite branching costs differ from the usual weak costs by imposing a
cardinality bound on the conditional laws almost surely.

If $\mu=\sum_i a_i\delta_{x_i}$ is a finitely supported probability measure on
$X$, then the formula
\[
 v_\#\left(\sum_i a_i\delta_{x_i}\right)
 =\sum_i a_i\delta_{v(x_i)}
\]
defines $v_\#\mu$ without requiring prior measurability of $v$.  General
measures occur below only in the implication from $(\mathrm E)$, where $v$ is
Borel.

\begin{proposition}[Finite branching data processing]
\label{prop:branching-dpi}
Let $1\leq m=\dim Y<\infty$.  Property $(\mathrm E)$ is equivalent to
\begin{equation}\label{eq:branching-dpi}
 \mathsf B^{[m]}_{p,Y}(v_\#\nu\mid v_\#\mu)
 \leq \mathsf B^{[m]}_{p,Z}(\nu\mid\mu)
\end{equation}
for some fixed $p\geq1$ and for all finitely supported probability measures
$\mu,\nu$ on $X$.  If $(\mathrm E)$ holds, then the same inequality holds for
every $p\geq1$, all $\mu,\nu\in\mathcal P_p(X)$, and with $m$ replaced by any
$q$ satisfying $1\leq q\leq m$ on both sides.
\end{proposition}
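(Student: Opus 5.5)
The plan is to prove the two implications separately. In both directions the link between $(\mathrm E)$ and \eqref{eq:branching-dpi} goes through the barycentric condition $(\mathrm C)$, using Theorem~\ref{thm:main}. Since $m=\dim Y$, the condition $(\mathrm C)$ involves exactly the branching levels $k\leq m$.

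\emph{From \eqref{eq:branching-dpi} to $(\mathrm E)$.} Fix the $p\geq1$ for which \eqref{eq:branching-dpi} holds. I will show $(\mathrm C)$, and then $(\mathrm E)$ follows from the sufficiency part of \cite[Theorem~1.2]{Ciosmak2024}.
\begin{enumerate}[label=\textup{(\arabic*)}]
\item Take $x_0,\ldots,x_k\in X$ with $k\leq m$ and weights $t_i$. Put $\mu=\delta_{x_0}$ and $\nu=\sum_i t_i\delta_{x_i}$, combining repeated points, so that $\nu$ has at most $k\leq m$ atoms.
\item Use the admissible triple with $S=x_0$, $T\sim\nu$ independent of $S$, and $U$ constant. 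This gives $\mathsf B^{[m]}_{p,Z}(\nu\mid\mu)\leq\norm{x_0-\bar x}^p$, where $\bar x=\sum_i t_ix_i$.
\item On the image side, $v_\#\mu=\delta_{v(x_0)}$. So every admissible triple has $S'=v(x_0)$ almost surely.
\item By the tower property, $\mathbb E\,\mathbb E[T'\mid S',U']=\bar v:=\sum_i t_i v(x_i)$. Jensen's inequality for the convex function $\norm{v(x_0)-\cdot}^p$ then gives $\mathbb E\norm{S'-\mathbb E[T'\mid S',U']}^p\geq\norm{v(x_0)-\bar v}^p$.
\item Taking the infimum and using \eqref{eq:branching-dpi} yields $\norm{v(x_0)-\bar v}\leq\norm{x_0-\bar x}$, which is $(\mathrm C)$.
\end{enumerate}

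\emph{From $(\mathrm E)$ to \eqref{eq:branching-dpi}.}
\begin{enumerate}[label=\textup{(\arabic*)}]
\item By Theorem~\ref{thm:main}, $(\mathrm E)$ gives $(\mathrm C)$ for all $k\leq m$. As already noted, $v$ is $1$-Lipschitz and hence Borel, and $v_\#$ maps $\mathcal P_p(X)$ into $\mathcal P_p(Y)$.
\item Fix $1\leq q\leq m$, measures $\mu,\nu\in\mathcal P_p(X)$, and $(S,T,U)\in\Gamma_q(\mu,\nu)$ with finite cost. Set $S'=v(S)$, $T'=v(T)$ and $U'=(S,U)$. The label $U'$ takes values in the standard Borel space $Z\times(\text{space of }U)$.
\item Since $S'$ is a measurable function of $U'$, conditioning on $(S',U')$ is the same as conditioning on $(S,U)$. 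Hence $v_\#\kappa(s,u;\cdot)$ is a regular conditional kernel for $T'$. It is concentrated on at most $q$ points, so $(S',T',U')\in\Gamma_q(v_\#\mu,v_\#\nu)$.
\item For $\mathcal L(S,U)$-almost every $(s,u)$, write $\kappa(s,u;\cdot)=\sum_{i=1}^{k}t_i\delta_{x_i}$ with $k\leq q$. Then
\[
 \mathbb E[T\mid S,U]=\sum_{i=1}^k t_ix_i,\qquad \mathbb E[T'\mid S',U']=\sum_{i=1}^k t_iv(x_i).
\]
\item Applying $(\mathrm C)$ with this $k\leq m$ and $x_0=s$ gives the pointwise bound
\[
 \norm{v(S)-\mathbb E[T'\mid S',U']}\leq\norm{S-\mathbb E[T\mid S,U]}\quad\text{almost surely.}
\]
\item Raising to the $p$th power, taking expectations, and then the infimum over $\Gamma_q(\mu,\nu)$ gives \eqref{eq:branching-dpi} with $q$ in place of $m$, for every $p\geq1$.
\end{enumerate}

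The step where $X\ni x_i$ must hold is the a.s.\ concentration of the kernel on $X$. This follows because $\nu$ is concentrated on $X$, after a null-set modification of the kernel.

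The main obstacle I expect is this measure-theoretic bookkeeping. First, one must check that the pushed-forward kernel is a legitimate regular conditional law given $(S',U')$. This is where retaining $S$ inside $U'$ is essential, because otherwise atoms from different fibres of a noninjective $v$ could merge. Second, one must handle measurable selection of the atoms and weights only implicitly, via the pointwise inequality. Everything else reduces directly to $(\mathrm C)$ and Theorem~\ref{thm:main}.
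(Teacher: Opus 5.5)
Your proposal is correct and follows the paper's proof essentially step for step. The forward direction uses the triple $(v(S),v(T),(S,U))$, the identity $\sigma(v(S),(S,U))=\sigma(S,U)$ to push the kernel forward, and a pointwise application of $(\mathrm C)$ with distinguished point $s$. The converse tests the inequality on $\mu=\delta_{x_0}$ and $\nu=\sum_i t_i\delta_{x_i}$ with a constant label and conditional Jensen; the only cosmetic difference is that you use one-sided bounds (constant $U$ on the $Z$ side, Jensen on the $Y$ side) where the paper computes both sides exactly, which suffices equally well.
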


\begin{proof}
Assume $(\mathrm E)$, fix $p\geq1$ and $1\leq q\leq m$, and let
$\mu,\nu\in\mathcal P_p(X)$.  For
$(S,T,U)\in\Gamma_q(\mu,\nu)$, modify $S$ and $T$ on null sets so that they are
$X$-valued, and set $U'=(S,U)$.  Since
\[
 \sigma(v(S),U')=\sigma(S,U),
\]
$\mathcal L(v(T)\mid v(S),U')$ is the pushforward under $v$ of
$\mathcal L(T\mid S,U)$ and is therefore supported on at most $q$ points.
Thus
\[
 (v(S),v(T),U')\in\Gamma_q(v_\#\mu,v_\#\nu),
 \qquad
 \mathbb E[v(T)\mid v(S),U']=\mathbb E[v(T)\mid S,U].
\]
For $\mathcal L(S,U)$-almost every $(s,u)$, the measure
$\kappa(s,u;\mathord\cdot)$ is concentrated on $X$ and has at most $q$ atoms.
Applying $(\mathrm C)$ pointwise to $\kappa(s,u;\mathord\cdot)$, with
distinguished point $s$, and then evaluating the resulting inequality at
$(S,U)$ gives
\[
 \norm{v(S)-\mathbb E[v(T)\mid S,U]}
 \leq\norm{S-\mathbb E[T\mid S,U]}.
\]
Raising to the $p$th power, taking expectations, and then taking infima proves
\eqref{eq:branching-dpi}.  Conversely, suppose that
\eqref{eq:branching-dpi} holds for some fixed $p\geq1$ and all finitely
supported $\mu,\nu$.  Fix $1\leq k\leq m$, points
$x,x_1,\ldots,x_k\in X$, and coefficients $t_1,\ldots,t_k\geq0$ with
$\sum_{i=1}^k t_i=1$.  Take
$\mu=\delta_x$ and $\nu=\sum_{i=1}^k t_i\delta_{x_i}$.
Since $S=x$ almost surely, conditional Jensen's inequality gives
\[
 \mathbb E\norm{x-\mathbb E[T\mid U]}^p
 \geq\norm{x-\mathbb ET}^p.
\]
The constant choice of $U$ is admissible because $\nu$ has at most $m$ atoms,
and it gives equality.  Applying the same argument after composing with $v$
shows that the two sides of \eqref{eq:branching-dpi} equal, respectively,
\[
 \norm{v(x)-\sum_{i=1}^k t_i v(x_i)}^p
 \quad\text{and}\quad
 \norm{x-\sum_{i=1}^k t_i x_i}^p.
\]
Thus $(\mathrm C)$ holds, and Theorem~\ref{thm:main} applies.
\end{proof}

Thus Proposition~\ref{prop:branching-dpi} identifies $(\mathrm E)$ with the
data processing inequality at level $q=m$ of the hierarchy
\eqref{eq:branching-hierarchy}, whose endpoints are Wasserstein transport and
barycentric weak transport.

\subsection{Convex Lipschitz functions and convex Poincar\'e inequalities}

Talagrand’s convex distance inequality is a foundational concentration result for product probability measures \cite{Talagrand1995}. For symmetric probability measures on the real line, Feldheim, Marsiglietti, Nayar and Wang prove a quantitative equivalence between the convex Poincaré inequality and the convex property \((\tau)\) with a quadratic-linear cost \cite{FeldheimMarsigliettiNayarWang2018}. This property tensorises and yields two-level concentration for convex sets and convex functions of independent real random variables whose laws satisfy it \cite{FeldheimMarsigliettiNayarWang2018}. For arbitrary probability measures on the real line, Gozlan, Roberto, Samson, Shu and Tetali prove that the convex Poincaré inequality is equivalent to the conjunction of two one-sided weak transport-entropy inequalities with a quadratic-linear cost \cite{GozlanRobertoSamsonShuTetali2018}. For probability measures on $\mathbb R^n$, Adamczak and Strzelecki establish the corresponding equivalence with weak transport-entropy inequalities \cite{AdamczakStrzelecki2019}. Shu and Strzelecki use weak transport-entropy inequalities to characterise probability measures on the real line satisfying a class of modified logarithmic Sobolev inequalities for convex functions \cite{ShuStrzelecki2018}. Wintenberger develops weak transport inequalities for non-product laws of weakly dependent time series and applies conditional weak transport to obtain oracle inequalities for least-squares estimators under dependence \cite{Wintenberger2015}.

Write $(\mathrm B)$ for the inequality in $(\mathrm C)$ required for every
positive integer $k$, without the restriction $k\leq\dim Y$.  Suppose that
$(\mathrm E)$ holds.  If $Y$ is
infinite-dimensional, then
Theorem~\ref{thm:main} gives $(\mathrm B)$.

The same conclusion holds when $\dim\operatorname{Aff}X<\infty$ and
$\dim\operatorname{Aff}X<\dim Y$.  To see this, fix $x_0,x_1,\ldots,x_n\in X$, coefficients
$t_1,\ldots,t_n\geq0$ with $\sum_{i=1}^n t_i=1$, and
$b=\sum_{i=1}^n t_i x_i$.  Consider the
fixed barycentre polytope
\[
 P_b=\left\{s\in\mathbb R^n\mid s_i\geq0\text{ for }1\leq i\leq n,\quad
       \sum_{i=1}^n s_i=1,\quad \sum_{i=1}^n s_i x_i=b\right\}.
\]
The support of every extreme point $s$ of $P_b$ is affinely independent:
otherwise there would be a nonzero $a\in\mathbb R^n$, supported on the positive
coordinates of $s$, such that $\sum_i a_i=0$ and $\sum_i a_i x_i=0$; for all
sufficiently small $\varepsilon>0$, the distinct points
$s+\varepsilon a$ and $s-\varepsilon a$ would then belong to $P_b$ and have
midpoint $s$.  Hence the support of every extreme point of $P_b$ has at most $r+1$
points.  Write $t=\sum_{j=1}^l\theta_j s^{(j)}$ as a convex combination of
extreme points $s^{(j)}$, $j=1,\dotsc,l$, of $P_b$.  By Theorem~\ref{thm:main}, property $(\mathrm C)$
holds.  Each $s^{(j)}$, after its zero coordinates have been discarded, has a convex representation
supported on at most $\dim\operatorname{Aff}X+1\leq\dim Y$ points, so $(\mathrm C)$ applies to the
corresponding convex combination; convexity of the norm then gives
\[
 \begin{aligned}
 \norm{v(x_0)-\sum_{i=1}^n t_i v(x_i)}
 &\leq\sum_{j=1}^l\theta_j
   \norm{v(x_0)-\sum_{i=1}^n s_i^{(j)}v(x_i)}\\
 &\leq\sum_{j=1}^l\theta_j\norm{x_0-b}
 =\norm{x_0-b}.
 \end{aligned}
\]
Thus $(\mathrm B)$ holds.

For a function $f\colon H\to\R$ on a Hilbert space $H$, define its
pointwise local Lipschitz constant by
\[
 \operatorname{lip}f(y)
 =\limsup_{\substack{z\to y\\z\ne y}}
   \frac{|f(z)-f(y)|}{\norm{z-y}}.
\]
At an isolated point $y$, we use the convention that
$\operatorname{lip}f(y)=0$.

\begin{proposition}[Lifting convex Lipschitz functions]
\label{prop:convex-lifting}
Assume $X\neq\emptyset$ and $(\mathrm B)$.  For every $L\geq0$, every
finite-valued convex $L$-Lipschitz function
$f\colon Y\to\R$ admits a finite-valued convex $L$-Lipschitz function $F\colon Z\to\R$
such that
\begin{equation}\label{eq:slope-lifting}
 F(x)=f(v(x)),\qquad
 \operatorname{lip}F(x)\leq\operatorname{lip}f(v(x))
 \quad\text{for }x\in X.
\end{equation}
\end{proposition}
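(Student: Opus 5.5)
A natural candidate is the supremum of affine minorants pulled back along $v$. Let $D\subset Y\times\R$ be the set of pairs $(w,c)$ with $\norm{w}\leq L$ and $\ip{w}{y}+c\leq f(y)$ for all $y\in Y$. Since $f$ is convex, finite and $L$-Lipschitz, we have $f(y)=\sup_{(w,c)\in D}(\ip{w}{y}+c)$. For each $x\in X$ and each subgradient $w\in\partial f(v(x))$ (so $\norm{w}\leq L$), consider the affine function on $Z$
\[
 \phi_{x,w,h}(z)=f(v(x))+\ip{h}{z-x},
\]
where $h\in Z$, $\norm{h}\leq\norm{w}$, is to be chosen. Then set
\[
 F(z)=\sup\bigl\{\phi(z)\bigr\}
\]
over an admissible family of such affine functions. $F$ is convex and $L$-Lipschitz as a supremum of $L$-Lipschitz affine maps, and it is finite because $X\neq\emptyset$ gives a lower bound. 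Finiteness from above will follow from the bound $F(z)\leq f(v(x'))+L\norm{z-x'}$ once $F=f\circ v$ on $X$ is established.

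Requiring merely $F\leq f\circ v$ on $X$ is too weak. The cleaner route is by duality: define $F$ as the largest convex function that is majorised on $X$ by $f\circ v$ and is $L$-Lipschitz, namely
\[
 F(z)=\inf\Bigl\{\sum_i t_i f(v(x_i))+L\norm{z-\textstyle\sum_i t_ix_i}\Bigr\},
\]
the infimum running over finite convex combinations of points of $X$. This is the convex $L$-Lipschitz envelope, and it is convex and $L$-Lipschitz by the standard argument. The key step is $F(x_0)\geq f(v(x_0))$. Take $w\in\partial f(v(x_0))$, so $\norm{w}\leq L$. By convexity of $f$,
\[
 \sum t_if(v(x_i))\geq f\Bigl(\sum t_iv(x_i)\Bigr)\geq f(v(x_0))+\ip{w}{\textstyle\sum t_iv(x_i)-v(x_0)}.
\]
Condition $(\mathrm B)$ then bounds the last term below by $-\norm{w}\,\norm{x_0-\sum t_ix_i}$. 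Hence $F(x_0)\geq f(v(x_0))$, and taking the trivial combination gives equality.

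For the slope estimate, the idea is to repeat this computation with $L$ replaced by the local constant $\ell=\operatorname{lip}f(v(x_0))$. Any $w\in\partial f(v(x_0))$ satisfies $\norm{w}\leq\ell$, since the directional derivative in direction $w$ is at least $\norm{w}^2$. The argument above then shows that the affine function $z\mapsto f(v(x_0))+\ldots$ is not needed directly; instead we bound $F(z)-F(x_0)$ from above and from below.

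The upper bound is the delicate part. It should hold for $z$ near $x_0$ via $F(z)\leq F(x_0)+L\norm{z-x_0}$, but that only gives $L$, not $\ell$. To get $\ell$, I would show $F(z)\geq f(v(x_0))-\norm{w}\,\norm{z-x_0}$ from the same inequality with $z$ in place of $x_0$. That is, for any convex combination,
\[
 \sum t_if(v(x_i))+L\norm{z-\bar x}\geq f(v(x_0))-\norm{w}\,\norm{x_0-\bar x}+L\norm{z-\bar x}\geq f(v(x_0))-\norm{w}\,\norm{x_0-z}.
\]
This gives the lower slope bound $\ell$. For the upper bound, convexity of $F$ reduces $\operatorname{lip}F(x_0)$ to $\sup_{\norm{h}=1}F'(x_0;h)$, so I need $F'(x_0;h)\leq\ell$ for all unit $h$.

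This upper bound is the main obstacle. I expect it to require modifying $F$ rather than using the envelope directly: take the supremum over $x\in X$ of the affine minorants $z\mapsto f(v(x))+\ip{g_x}{z-x}$. Here each $g_x\in Z$ has $\norm{g_x}\leq\operatorname{lip}f(v(x))$ and is chosen so that the minorant stays below the envelope above. Such a $g_x$ exists by a Hahn--Banach/minimax argument applied to the inequality just proved. Then $F=f\circ v$ on $X$ still holds (it is sandwiched between the minorant at $x$ and the envelope). Moreover, the subdifferential of $F$ at $x_0$ is the closed convex hull of the gradients $g_x$ of the active minorants, and these can be arranged to have norm at most $\ell$, which yields $\operatorname{lip}F(x_0)\leq\ell$. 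Making this active-set control rigorous is where I expect the real work.
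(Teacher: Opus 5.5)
Your first half is correct. The envelope $F_{\mathrm{env}}(z)=\inf\{\sum_it_if(v(x_i))+L\norm{z-\sum_it_ix_i}\}$ is finite, convex and $L$-Lipschitz, and it equals $f\circ v$ on $X$ by your subgradient argument with $(\mathrm B)$. Your estimate $F(x_0)-F(z)\leq\norm{w}\norm{z-x_0}$ is also correct, but it is the easy half of the slope bound.

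There is a genuine gap. The substantive half, $F(z)-F(x_0)\leq(\operatorname{lip}f(v(x_0))+o(1))\norm{z-x_0}$, is left open, and the repair you sketch does not work as stated. Your constraints on the pieces are $\norm{g_x}\leq\operatorname{lip}f(v(x))$ and $f(v(x))+\ip{g_x}{\cdot-x}\leq F_{\mathrm{env}}$. They do not control pieces coming from \emph{other} points $x$ that are active or nearly active at $x_0$.

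Take $Z=Y=\R$, $X=\{0,1\}$, $v=\mathrm{id}$ and $f(y)=\max\{0,y-\tfrac12\}$, so $L=1$. Then $F_{\mathrm{env}}(z)$ equals $\lvert z\rvert$, $z/2$, $z-\tfrac12$ on $(-\infty,0]$, $[0,1]$, $[1,\infty)$ respectively. The choices $g_0=0$ and $g_1=\tfrac12$ satisfy your constraints. The resulting $F(z)=\max\{0,z/2\}$ agrees with $f\circ v$ on $X$, yet $\operatorname{lip}F(0)=\tfrac12>0=\operatorname{lip}f(0)$. So ``these can be arranged to have norm at most $\ell$'' is exactly the missing idea, not a technicality.

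Independently, reducing $\operatorname{lip}F(x_0)$ to $\sup_{\norm h=1}F'(x_0;h)$, or to gradients of exactly active pieces, fails when $Z$ is infinite-dimensional. On $\ell^2$, let $G(y)=\sup_n\max\{0,\ip{e_n}{y}-1/n\}$. It vanishes on $\pm se_m$ for $0<s\leq1/m$, so $\partial G(0)=\{0\}$ and $G'(0;\cdot)\equiv0$. Yet $G(2e_n/n)=1/n$ gives $\operatorname{lip}G(0)\geq\tfrac12$. What is needed is a uniform bound over all $\varepsilon$-active pieces.

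The paper supplies both ingredients by tying every piece to an affine minorant $(\alpha,p)$ of $f$ on $Y$. It pulls $\ip{p}{v}$ back through $H_p(z)=\inf\{\sum_it_i\ip{p}{v(x_i)}+\norm p\norm{z-\sum_it_ix_i}\}$. This is your envelope construction applied to the scalar map $\ip{p}{v}$ with constant $\norm p$, and by $(\mathrm B)$ it equals $\ip{p}{v}$ on $X$. It then sets $F=\sup_{(\alpha,p)}(\alpha+H_p)$.

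For $y=v(x)$ and $d=\norm{z-x}$, this gives $F(z)-F(x)\leq\sup\{\norm p\,d-\delta_y(\alpha,p)\}$, where $\delta_y(\alpha,p)=f(y)-\alpha-\ip{p}{y}$. Thus near-activity at $x$ in $Z$ becomes near-activity at $v(x)$ of a minorant of $f$ itself. Only minorants with $\delta_y(\alpha,p)\leq2Ld$ contribute. Any minorant with $\delta_y(\alpha,p)\leq\varepsilon$ satisfies, by the minorant inequality at $y+\sqrt\varepsilon\,p/\norm p$,
$\norm p\leq\sup_{0<\norm u\leq\sqrt\varepsilon}\lvert f(y+u)-f(y)\rvert/\norm u+\sqrt\varepsilon$, and the right side tends to $\operatorname{lip}f(y)$.

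If you prefer to keep affine pieces, use $f(v(x))+\ip{g}{\cdot-x}$ with $g\in\partial H_p(x)$ and $p\in\partial f(v(x))$. The same estimate then works. In the example above, the paper's construction yields $F(z)=\max\{0,\lvert z\rvert-\tfrac12\}$, whose slope at $0$ is $0$.
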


The proof, including the pointwise slope estimate, is given at the end of this
section.

\begin{remark}[Affine extension is not automatic]
Example~\ref{ex:square} satisfies $(\mathrm C)$, but there is no affine map
$A\colon \operatorname{Aff}X\to Y$ satisfying $A|_X=v$: indeed,
$a_1+a_4=a_2+a_3$, whereas
$v(a_1)+v(a_4)\ne v(a_2)+v(a_3)$.  Hence the corollary cannot in general be
reduced to a construction $F=f\circ A$ with an affine contraction $A\colon Z\to Y$
extending $v$.
\end{remark}

\begin{corollary}[Convex Poincar\'e inequality transfer without loss]
\label{cor:convex-poincare}
Assume that $X$ is Borel, that $(\mathrm E)$ holds, and either that $Y$ is
infinite-dimensional or that
$\dim\operatorname{Aff}X+1\leq\dim Y$.  Suppose that, for some $C\geq0$,
$\mu\in\mathcal P_2(Z)$ satisfies $\mu(X)=1$ and
\[
 \operatorname{Var}_\mu G
 \leq C\int_Z(\operatorname{lip}G)^2\,d\mu
\]
for every convex Lipschitz $G\colon Z\to\R$.  Then
\[
 \operatorname{Var}_{v_\#\mu}f
 \leq C\int_Y(\operatorname{lip}f)^2\,d(v_\#\mu)
\]
for every convex Lipschitz $f\colon Y\to\R$.
\end{corollary}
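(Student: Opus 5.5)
The plan is to reduce the statement to Proposition~\ref{prop:convex-lifting}, applied to a Lipschitz test function on $Y$. First, the discussion preceding Proposition~\ref{prop:convex-lifting} shows that under the hypotheses, $(\mathrm E)$ together with either $Y$ infinite-dimensional or $\dim\operatorname{Aff}X+1\leq\dim Y$, condition $(\mathrm B)$ holds. The case $k=1$ then shows that $v$ is $1$-Lipschitz, hence Borel on the Borel set $X$. So $v_\#\mu$ is a well-defined Borel probability measure on $Y$, and it lies in $\mathcal P_2(Y)$ since $\norm{v(x)}\leq\norm{v(x_1)}+\norm{x-x_1}$ for a fixed $x_1\in X$. (If $X=\emptyset$ then $\mu(X)=1$ is impossible, so $X\neq\emptyset$.)

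Now fix a convex Lipschitz $f\colon Y\to\R$ with constant $L$. Proposition~\ref{prop:convex-lifting} supplies a convex $L$-Lipschitz $F\colon Z\to\R$ with $F=f\circ v$ on $X$ and $\operatorname{lip}F(x)\leq\operatorname{lip}f(v(x))$ for $x\in X$. Apply the hypothesis to $G=F$. Since $\mu(X)=1$ and $F=f\circ v$ on $X$, the law of $F$ under $\mu$ equals the law of $f$ under $v_\#\mu$. Both functions are Lipschitz, so they have finite second moments against the $\mathcal P_2$ measures. Therefore $\operatorname{Var}_\mu F=\operatorname{Var}_{v_\#\mu}f$. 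On the other side,
\[
 \int_Z(\operatorname{lip}F)^2\,d\mu=\int_X(\operatorname{lip}F)^2\,d\mu
 \leq\int_X(\operatorname{lip}f\circ v)^2\,d\mu=\int_Y(\operatorname{lip}f)^2\,d(v_\#\mu).
\]
Combining these with the assumed inequality for $G=F$ gives the claim.

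The only delicate point is measurability of $\operatorname{lip}F$ and $\operatorname{lip}f$, which is needed for the integrals to make sense. I would handle it by noting that for a continuous function $h$ on a Hilbert space, the quantity $\sup_{0<\norm{z-y}<r}|h(z)-h(y)|/\norm{z-y}$ is lower semicontinuous in $y$; one can also restrict $z$ to a countable dense set when the space is separable. Then $\operatorname{lip}h$ is the decreasing limit of these as $r\downarrow0$ along rationals, and is therefore Borel. All of these functions are bounded by the Lipschitz constants, so every integral is finite. If the paper's convention is to read such integrals as the ones in the hypothesis, this step is simply consistent with it. The substantive work, the pointwise slope inequality, is entirely contained in Proposition~\ref{prop:convex-lifting}, so the corollary is a direct pushforward computation.
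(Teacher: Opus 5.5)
Your proposal is correct and follows the paper's proof essentially step for step. You obtain $(\mathrm B)$ from the discussion preceding Proposition~\ref{prop:convex-lifting}, use the case $k=1$ to see that $v$ is $1$-Lipschitz so that $v_\#\mu\in\mathcal P_2(Y)$, and apply the hypothesis to the lift $F$. You then compare variances and slope integrals using $F=f\circ v$ and $\operatorname{lip}F\leq(\operatorname{lip}f)\circ v$ on $X$. Your extra remarks, that $X\neq\emptyset$ and that $\operatorname{lip}F$ and $\operatorname{lip}f$ are Borel by lower semicontinuity of the truncated difference-quotient suprema, fill in points the paper leaves implicit.
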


\begin{proof}
Property $(\mathrm B)$ holds by the observation preceding
Proposition~\ref{prop:convex-lifting}.  Moreover, the case $k=1$ of
$(\mathrm C)$ implies that $v$ is $1$-Lipschitz, so $\mu\in\mathcal P_2(Z)$ and
$\mu(X)=1$ imply
$v_\#\mu\in\mathcal P_2(Y)$.  Applying the source inequality to the lift $F$
gives
\[
 \operatorname{Var}_{v_\#\mu}f=\operatorname{Var}_\mu F
 \leq C\int_Z(\operatorname{lip}F)^2\,d\mu
\leq C\int_Y(\operatorname{lip}f)^2\,d(v_\#\mu).
\]
Taking the infimum over all admissible constants $C$ shows that the best
constant for $v_\#\mu$ is no larger than that for $\mu$.
\end{proof}

\subsection*{Proof of Proposition~\ref{prop:convex-lifting}}

\begin{proof}
If $Y=\{0\}$, the constant function $F(z)=f(0)$ has all the required
properties.  Henceforth assume that $Y\ne\{0\}$.

For $p\in Y$, define, with the infimum taken over finite convex combinations,
\[
 \begin{aligned}
 H_p(z)=\inf\Biggl\{&
  \sum_{i=1}^n t_i\ip{p}{v(x_i)}
  +\norm p\norm{z-\sum_{i=1}^n t_i x_i}\mid\\[-2pt]
  &n\geq1,\quad x_1,\ldots,x_n\in X,\quad
  t_1,\ldots,t_n\geq0,\quad
  \sum_{i=1}^n t_i=1\Biggr\}.
 \end{aligned}
\]
Condition $(\mathrm B)$ shows that $H_p(x)=\ip{p}{v(x)}$ on $X$.  Fixing
$x_0\in X$, condition $(\mathrm B)$ and the triangle inequality show that the
value corresponding to every admissible finite convex combination is bounded
below by $\ip{p}{v(x_0)}-\norm p\norm{z-x_0}$.  A one-point combination gives
a finite upper bound, so $H_p$ is real-valued.  Taking convex combinations of
two admissible finite convex combinations proves that $H_p$ is convex.  Since
the functions of $z$ appearing in the infimum are all $\norm p$-Lipschitz,
their real-valued infimum $H_p$ is also $\norm p$-Lipschitz.

Let $\mathcal A_f$ be the family of continuous affine minorants of $f$:
\[
 \mathcal A_f=
 \left\{(\alpha,p)\in\R\times Y\mid
 \alpha+\ip{p}{y}\leq f(y)\ \text{for every }y\in Y\right\}.
\]
Since $f$ is finite-valued, convex, and continuous on $Y$, the Hahn--Banach
supporting hyperplane theorem gives, for every $y\in Y$, an element
$(\alpha,p)\in\mathcal A_f$ satisfying
$\alpha+\ip{p}{y}=f(y)$.  Consequently, for every $y\in Y$,
\begin{equation}\label{eq:affine-minorants}
 f(y)=\sup\{\alpha+\ip{p}{y}\mid(\alpha,p)\in\mathcal A_f\}.
\end{equation}
Every $(\alpha,p)\in\mathcal A_f$ satisfies $\norm p\leq L$.  This is
immediate if $p=0$; otherwise, fix $y_0\in Y$, evaluate the minorant along
$y_0+t p/\norm p$, and let $t\to\infty$.

Define
\[
 F(z)=\sup\{\alpha+H_p(z)\mid(\alpha,p)\in\mathcal A_f\}.
\]
For $x\in X$, the identity for $H_p$ and \eqref{eq:affine-minorants} give
$F(x)=f(v(x))$.  In particular, $F$ is finite at $x_*$.  Every function in
the supremum defining $F$ is $L$-Lipschitz, so taking suprema shows that $F$
is finite-valued, convex, and $L$-Lipschitz on $Z$.

For $y\in Y$ and $(\alpha,p)\in\mathcal A_f$, put
\[
 \delta_y(\alpha,p)=f(y)-\alpha-\ip{p}{y}\geq0.
\]
For $\varepsilon>0$, define
\[
 M_y(\varepsilon)
 =\sup\{\norm p\mid (\alpha,p)\in\mathcal A_f,
                    \ \delta_y(\alpha,p)\leq\varepsilon\}.
\]
The defining set is nonempty, $M_y(\varepsilon)\leq L$, and $M_y$ is
nondecreasing.  Let $(\alpha,p)$ be any element of the set defining this
supremum.  If $p\ne0$, put
$h=\sqrt\varepsilon\,p/\norm p$.  The minorant inequality at $y+h$ and
$\alpha+\ip{p}{y}=f(y)-\delta_y(\alpha,p)$ give
\[
 \begin{aligned}
  f(y+h)
  &\geq f(y)-\delta_y(\alpha,p)+\sqrt\varepsilon\norm p,\\
  \norm p
  &\leq
  \frac{f(y+h)-f(y)}{\sqrt\varepsilon}
  +\frac{\delta_y(\alpha,p)}{\sqrt\varepsilon}\\
  &\leq
  \sup\left\{
  \frac{|f(y+u)-f(y)|}{\norm u}\mid
  0<\norm u\leq\sqrt\varepsilon\right\}+\sqrt\varepsilon.
 \end{aligned}
\]
The same bound is trivial when $p=0$.  Taking the supremum over all such
minorants and using the monotonicity of $M_y$ while letting
$\varepsilon\downarrow0$ gives
\begin{equation}\label{eq:minorant-slope}
 \lim_{\varepsilon\downarrow0}M_y(\varepsilon)
 \leq\operatorname{lip}f(y).
\end{equation}

Now fix $x\in X$ and let $y=v(x)$.  Choose
$(\alpha_0,p_0)\in\mathcal A_f$ satisfying
\[
 \alpha_0+\ip{p_0}{y}=f(y),
 \qquad \delta_y(\alpha_0,p_0)=0.
\]
If $p_0\ne0$, then the minorant inequality at
$y+s p_0/\norm{p_0}$ gives, for every $s>0$,
\[
 f(y+s p_0/\norm{p_0})
 \geq f(y)+s\norm{p_0}.
\]
Dividing by $s$ and letting $s\downarrow0$ yields
\begin{equation}\label{eq:touching-slope}
 \norm{p_0}\leq\operatorname{lip}f(y),
\end{equation}
with the case $p_0=0$ immediate.

If $L=0$, then $F$ is $0$-Lipschitz and the desired slope estimate is
immediate.  Assume henceforth that $L>0$.  Let $z\in Z\setminus\{x\}$ and put
$d=\norm{z-x}$.  Since $F(x)=f(y)$, $H_p(x)=\ip{p}{y}$, and
$H_p(z)\leq H_p(x)+\norm p\,d$, we have
\[
\begin{aligned}
 F(z)-F(x)
 &=\sup\{\alpha+H_p(z)-f(y)\mid
          (\alpha,p)\in\mathcal A_f\}\\
 &\leq\sup\{\alpha+\ip{p}{y}+\norm p\,d-f(y)\mid
             (\alpha,p)\in\mathcal A_f\}\\
 &=\sup\{\norm p\,d-\delta_y(\alpha,p)\mid
          (\alpha,p)\in\mathcal A_f\}.
\end{aligned}
\]
Because $\norm p\leq L$, terms with $\delta_y(\alpha,p)>2Ld$ are negative,
whereas the chosen minorant $(\alpha_0,p_0)$ contributes the nonnegative term
$\norm{p_0}d$.  Consequently, restricting the supremum to the set
$\delta_y(\alpha,p)\leq2Ld$ leaves it unchanged.  For the remaining terms,
$\norm p\,d-\delta_y(\alpha,p)\leq\norm p\,d$, and hence
\[
 F(z)-F(x)
 \leq d M_y(2Ld).
\]
Thus
\begin{equation}\label{eq:upper-difference-quotient}
 \frac{F(z)-F(x)}{\norm{z-x}}
 \leq M_y(2L\norm{z-x}).
\end{equation}

To bound the opposite difference, use the $\norm{p_0}$-Lipschitz property of
$H_{p_0}$:
\[
 F(z)\geq\alpha_0+H_{p_0}(z)
 \geq F(x)-\norm{p_0}d.
\]
Consequently,
\begin{equation}\label{eq:lower-difference-quotient}
 \frac{F(x)-F(z)}{\norm{z-x}}
 \leq\norm{p_0}\leq\operatorname{lip}f(y).
\end{equation}
Combining \eqref{eq:upper-difference-quotient} and
\eqref{eq:lower-difference-quotient}, and then using
\eqref{eq:minorant-slope}, yields
\[
 \begin{aligned}
 \operatorname{lip}F(x)
 &=\limsup_{\substack{z\to x\\z\ne x}}
   \frac{|F(z)-F(x)|}{\norm{z-x}}\\
 &\leq\max\left\{
       \lim_{\varepsilon\downarrow0}M_y(\varepsilon),\norm{p_0}\right\}
 \leq\operatorname{lip}f(y).
 \end{aligned}
\]
This proves \eqref{eq:slope-lifting}.
\end{proof}

\section*{Data availability}

No data were used for the research described in the article.

\section*{Declaration of generative AI and AI-assisted technologies in the
manuscript preparation process}

During the preparation of this work, the author used OpenAI Codex to assist in
developing and checking mathematical arguments, organising the manuscript and
literature, editing the language, and preparing the LaTeX source.  The author
subsequently reviewed, verified, and edited the content and takes full
responsibility for the article.


\begin{thebibliography}{99}

\bibitem{AdamczakStrzelecki2019}
R.~Adamczak, M.~Strzelecki,
\textit{On the convex Poincar\'e inequality and weak transportation
inequalities},
Bernoulli 25 (1) (2019) 341--374.
\href{https://doi.org/10.3150/17-BEJ989}{doi:10.3150/17-BEJ989}.

\bibitem{AzagraLeGruyerMudarra2021}
D.~Azagra, E.~Le~Gruyer, C.~Mudarra,
\textit{Kirszbraun's theorem via an explicit formula},
Canad. Math. Bull. 64 (1) (2021) 142--153.
\href{https://doi.org/10.4153/S0008439520000314}
{doi:10.4153/S0008439520000314}.

\bibitem{BackhoffBeiglbockPammer2019}
J.~Backhoff-Veraguas, M.~Beiglb\"ock, G.~Pammer,
\textit{Existence, duality, and cyclical monotonicity for weak transport
costs},
Calc. Var. Partial Differential Equations 58 (2019), Paper No.~203, 28 pp.
\href{https://doi.org/10.1007/s00526-019-1624-y}
{doi:10.1007/s00526-019-1624-y}.

\bibitem{BackhoffVeraguasPammer2022}
J.~Backhoff-Veraguas, G.~Pammer,
\textit{Applications of weak transport theory},
Bernoulli 28 (1) (2022) 370--394.
\href{https://doi.org/10.3150/21-BEJ1346}
{doi:10.3150/21-BEJ1346}.

\bibitem{BeiglbockPammerRiessSchrott2025}
M.~Beiglb\"ock, G.~Pammer, L.~Riess, S.~Schrott,
\textit{The fundamental theorem of weak optimal transport},
preprint (2025).
\href{https://arxiv.org/abs/2501.16316}{arXiv:2501.16316}.

\bibitem{CavagnariSavareSodini2025}
G.~Cavagnari, G.~Savar\'e, G.E.~Sodini,
\textit{Extension of monotone operators and Lipschitz maps invariant for a
group of isometries},
Canad. J. Math. 77 (1) (2025) 149--186.
\href{https://doi.org/10.4153/S0008414X23000846}
{doi:10.4153/S0008414X23000846}.

\bibitem{CiosmakThesis}
K.J.~Ciosmak,
\textit{Optimal Transport and 1-Lipschitz Maps},
DPhil thesis, University of Oxford, 2020.
\href{https://doi.org/10.5287/ora-nompqy5bn}{doi:10.5287/ora-nompqy5bn}.

\bibitem{Ciosmak2021}
K.J.~Ciosmak,
\textit{Continuity of extensions of Lipschitz maps},
Israel J. Math. 245 (2) (2021) 567--588.
\href{https://doi.org/10.1007/s11856-021-2215-0}{doi:10.1007/s11856-021-2215-0}.

\bibitem{Ciosmak2024}
K.J.~Ciosmak,
\textit{Continuity of extensions of Lipschitz maps and of monotone maps},
J. Lond. Math. Soc. (2) 110 (5) (2024) e70014.
\href{https://doi.org/10.1112/jlms.70014}{doi:10.1112/jlms.70014}.

\bibitem{EspinolaNicolae2015}
R.~Esp\'{\i}nola, A.~Nicolae,
\textit{Continuous selections of Lipschitz extensions in metric spaces},
Rev. Mat. Complut. 28 (3) (2015) 741--759.
\href{https://doi.org/10.1007/s13163-015-0171-0}
{doi:10.1007/s13163-015-0171-0}.

\bibitem{FeldheimMarsigliettiNayarWang2018}
N.~Feldheim, A.~Marsiglietti, P.~Nayar, J.~Wang,
\textit{A note on the convex infimum convolution inequality},
Bernoulli 24 (1) (2018) 257--270.
\href{https://doi.org/10.3150/16-BEJ875}
{doi:10.3150/16-BEJ875}.

\bibitem{GozlanJuillet2020}
N.~Gozlan, N.~Juillet,
\textit{On a mixture of Brenier and Strassen theorems},
Proc. Lond. Math. Soc. (3) 120 (3) (2020) 434--463.
\href{https://doi.org/10.1112/plms.12302}{doi:10.1112/plms.12302}.

\bibitem{GozlanRobertoSamsonShuTetali2018}
N.~Gozlan, C.~Roberto, P.-M.~Samson, Y.~Shu, P.~Tetali,
\textit{Characterization of a class of weak transport-entropy inequalities
on the line},
Ann. Inst. Henri Poincar\'e Probab. Stat. 54 (3) (2018) 1667--1693.
\href{https://doi.org/10.1214/17-AIHP851}
{doi:10.1214/17-AIHP851}.

\bibitem{GozlanRobertoSamsonTetali2017}
N.~Gozlan, C.~Roberto, P.-M.~Samson, P.~Tetali,
\textit{Kantorovich duality for general transport costs and applications},
J. Funct. Anal. 273 (11) (2017) 3327--3405.
\href{https://doi.org/10.1016/j.jfa.2017.08.015}
{doi:10.1016/j.jfa.2017.08.015}.

\bibitem{Kallenberg2021}
O.~Kallenberg,
\textit{Foundations of Modern Probability},
3rd ed., Probability Theory and Stochastic Modelling, vol.~99,
Springer, Cham, 2021.
\href{https://doi.org/10.1007/978-3-030-61871-1}
{doi:10.1007/978-3-030-61871-1}.

\bibitem{Kirszbraun1934}
M.~Kirszbraun,
\textit{\"Uber die zusammenziehende und Lipschitzsche Transformationen},
Fund. Math. 22 (1934) 77--108.
\href{https://doi.org/10.4064/fm-22-1-77-108}{doi:10.4064/fm-22-1-77-108}.

\bibitem{Kopecka2012Bootstrapping}
E.~Kopeck\'a,
\textit{Bootstrapping Kirszbraun's extension theorem},
Fund. Math. 217 (1) (2012) 13--19.
\href{https://doi.org/10.4064/fm217-1-2}{doi:10.4064/fm217-1-2}.

\bibitem{Kopecka2012Continuous}
E.~Kopeck\'a,
\textit{Extending Lipschitz mappings continuously},
J. Appl. Anal. 18 (2) (2012) 167--177.
\href{https://doi.org/10.1515/jaa-2012-0011}
{doi:10.1515/jaa-2012-0011}.

\bibitem{KopeckaReich2011}
E.~Kopeck\'a, S.~Reich,
\textit{Continuous extension operators and convexity},
Nonlinear Anal. 74 (18) (2011) 6907--6910.
\href{https://doi.org/10.1016/j.na.2011.07.013}
{doi:10.1016/j.na.2011.07.013}.

\bibitem{LangSchroeder1997}
U.~Lang, V.~Schroeder,
\textit{Kirszbraun's theorem and metric spaces of bounded curvature},
Geom. Funct. Anal. 7 (3) (1997) 535--560.
\href{https://doi.org/10.1007/s000390050018}
{doi:10.1007/s000390050018}.

\bibitem{Marton1996}
K.~Marton,
\textit{Bounding $\bar d$-distance by informational divergence: a method to
prove measure concentration},
Ann. Probab. 24 (2) (1996) 857--866.
\href{https://doi.org/10.1214/aop/1039639365}
{doi:10.1214/aop/1039639365}.

\bibitem{ReichSimons2005}
S.~Reich, S.~Simons,
\textit{Fenchel duality, Fitzpatrick functions and the Kirszbraun--Valentine
extension theorem},
Proc. Amer. Math. Soc. 133 (9) (2005) 2657--2660.
\href{https://doi.org/10.1090/S0002-9939-05-07983-9}
{doi:10.1090/S0002-9939-05-07983-9}.

\bibitem{ShuStrzelecki2018}
Y.~Shu, M.~Strzelecki,
\textit{A characterization of a class of convex log-Sobolev inequalities on
the real line},
Ann. Inst. Henri Poincar\'e Probab. Stat. 54 (4) (2018) 2075--2091.
\href{https://doi.org/10.1214/17-AIHP865}
{doi:10.1214/17-AIHP865}.

\bibitem{Talagrand1995}
M.~Talagrand,
\textit{Concentration of measure and isoperimetric inequalities in product
spaces},
Publ. Math. Inst. Hautes \`Etudes Sci. 81 (1995) 73--205.
\href{https://doi.org/10.1007/BF02699376}
{doi:10.1007/BF02699376}.

\bibitem{Valentine1945}
F.A.~Valentine,
\textit{A Lipschitz condition preserving extension for a vector function},
Amer. J. Math. 67 (1945) 83--93.
\href{https://doi.org/10.2307/2371917}{doi:10.2307/2371917}.

\bibitem{Wintenberger2015}
O.~Wintenberger,
\textit{Weak transport inequalities and applications to exponential and
oracle inequalities},
Electron. J. Probab. 20 (2015), Paper No.~114, 27 pp.
\href{https://doi.org/10.1214/EJP.v20-3558}
{doi:10.1214/EJP.v20-3558}.

\end{thebibliography}
\end{document}